\newtheorem{thm}{Theorem}[section]
\newtheorem{lem}[thm]{Lemma}
\newtheorem{prop}[thm]{Proposition}
\newtheorem{dfn}[thm]{Definition}
\newcommand{\Ext}{\operatorname{Ext}}
\newcommand{\rk}{\operatorname{rk}}
\newcommand{\ch}{\operatorname{ch}}
\newcommand{\td}{\operatorname{td}}
\numberwithin{equation}{section}
\begin{document}

\title[Gieseker Harder-Narasimhan filtration for principal bundles]{On
the Gieseker Harder-Narasimhan filtration for principal bundles}

\author[I. Biswas]{Indranil Biswas}

\address{School of Mathematics, Tata Institute of Fundamental Research, Homi Bhabha Road, Mumbai 400005, India}

\email{indranil@math.tifr.res.in}

\author[A. Zamora]{Alfonso Zamora}

\address{Departamento de Matem\'atica, Instituto Superior T\'ecnico, Av. Rovisco Pais
1049-001 Lisboa, Portugal}

\email{alfonsozamora@tecnico.ulisboa.pt}

\subjclass[2000]{14F05, 14D20}

\keywords{Orthogonal bundle, symplectic bundle, Gieseker semistability,
Harder-Narasimhan filtration}

\begin{abstract}
We give an example of an orthogonal bundle where the Harder-Narasimhan 
filtration, with respect to Gieseker semistability, of its underlying vector bundle
does not correspond to any parabolic reduction of the orthogonal bundle. A similar
example is given for the symplectic case.
\end{abstract}

\maketitle

\section{Introduction}

Mumford and Takemoto (c.f. \cite{GIT} and \cite{Ta}) constructed a moduli space for semistable holomorphic 
vector bundles over Riemann surfaces, by defining a vector bundle $E$ to be semistable 
if for every non trivial proper subbundle $F\subset E$
we have
$$\frac{\deg F}{\rk F}\leq \frac{\deg E}{\rk E}\; .$$
Later on, Gieseker and Maruyama (c.f. \cite{Gi} and \cite{Ma}) extended the construction to higher dimensional varieties, by 
modifying the definition of stability and using Hilbert polynomials instead of degrees. 

In \cite{HN}, Harder and Narasimhan proved that a holomorphic vector bundle over a Riemann surface which is not stable admits a unique canonical filtration
$$0\subset E_{1} \subset E_{2} \subset \cdots \subset E_{t} \subset
E_{t+1}=E\; ,$$ which verifies that the quotients $E^{i}:=E_{i}/E_{i-1}$ are semistable and the slopes of the quotients
are decreasing:
   $$\frac{\deg E^{1}}{\rk E^{1}}>\frac{\deg E^{2}}{\rk E^{2}}>\cdots>\frac{\deg E^{t+1}}{\rk E^{t+1}}\; .$$
This fact can be extended to torsion free sheaves over projective varieties by using Gieseker stability (c.f. \cite{Gi} and \cite[Theorem 1.3.6]{HL}).

Ramanathan, in \cite{Ra}, constructed a moduli space for semistable principal $G$-bundle over Riemann surfaces, 
where $G$ is a connected reductive complex algebraic group, by declaring
that a $G$-bundle $E\rightarrow X$ is semistable if for any reduction of structure group $\sigma :X\rightarrow E/P$
to any maximal parabolic subgroup $P\subset G$, we have 
$$\deg \sigma^{\ast}(T_{(E/P)/X})\geq 0$$
where $T_{(E/P)/X}\rightarrow E/P$ is the tangent bundle along the fibers of the projection
$E/P\rightarrow X$.

To generalize Ramanathan's construction to higher dimension, a
Gieseker-like definition of stability has to be given for principal
bundles. It turns out that this definition depends on the choice of
a representation $\rho:G\to {\rm SL}(V)$. 
For the classical groups, we can choose the standard representation
(c.f. \cite{GS1}). For a connected reductive group, we can take the 
adjoint representation into the semisimple part of the Lie algebra
(c.f. \cite{GS2,GS3}), and if $G$ is semisimple we can take any faithful 
representation (c.f. \cite{Sch1,Sch2}). See \cite{GLSS1} and \cite{GLSS2}
for other cases.

A one parameter subgroup $\lambda:\mathbb{C}^* \to G$ of $G$ defines a
parabolic subgroup
$$
P(\lambda)=\{ g\in G \,\mid\, \lim_{z\to \infty} \lambda(t)g\lambda(t)^{-1}
~\text{exists in $G$} \}
$$
We remark that all parabolic subgroups are of this form.

Using the representation $\rho:G \to {\rm SL}(V)$ chosen above,
the
one parameter subgroup $\lambda$ gives a filtration of $V$ as
follows. Let 
$$
\gamma_1 < \gamma_2 <\cdots < \gamma_l
$$
be the different weights of the action of $\lambda$ on $V$. Let
$V^{i}$ be the subspace where $\lambda$ acts as $t^{\gamma_i}$, and
let $V_i=\bigoplus_{j=1}^{i}$. Therefore, we obtain a filtration
\begin{equation}
\label{filtV}
0\subset V_1 \subset V_2 \subset \cdots \subset V_l=V  
\end{equation}
This filtration is preserved by the action of the parabolic group $P(\lambda)$
on $V$ given by the representation.

We say that an open subset $U\subset X$ is big if the dimension of
its complement has codimension at least 2.
Consider a reduction of structure group of the principal $G$-bundle $E$ to $P(\lambda)$ over a
big open set $\iota:U\subset X$. 
This reduction, together with the filtration of $V$ \eqref{filtV}
produces a filtration of vector bundles on $U$
$$
0\subset F^U_1 \subset F^U_2 \subset \cdots \subset F^U_l = E(V)|_U
$$ 
Extending each vector bundle to a torsion free sheaf on $X$ as
$F_i = \iota_* F^U_i \cap E(V)$, we obtain a filtration by torsion
free sheaves on $X$
\begin{equation}
  \label{filtF}
0\subset F_1 \subset F_2 \subset \cdots \subset F_l = E(V)  
\end{equation}

We say that $E$ is semistable if for any one parameter subgroup 
which factors through the derived subgroup $[G,G]$ of $G$
$$
\rho:\mathbb{C}^* \to [G,G] \to G
$$
and for any reduction of structure group
of $E$ to the parabolic subgroup $P(\lambda)$ over a big open 
set $U\subset X$, the associated filtration of torsion free sheaves 
\eqref{filtF} satisfies
$$
\sum_{i=1}^{l-1} (\gamma_{i+1}-\gamma_{i})
\big(\rk F\cdot P_{F_i}(m) -\rk F_i\cdot P_F(m)\big) \leq 0
$$
for $m\gg 0$. We say that it is stable if the inequality is strict.

In the case of principal $G$-bundles over Riemann surfaces, this
notion of stability coincides with the one in \cite{Ra} 
(c.f. \cite[Corollary 5.8]{GS3}, \cite[Proposition 5.4]{Sch2} , 
\cite[Lemma 3.2.3]{GLSS1}), \cite[Lemma 2.5.4]{GLSS2})
and therefore does not depend on the choice the representation $\rho$.

In the case of the adjoint representation into the semisimple part of the Lie algebra,
the filtrations (\ref{filtF}) that we obtain can be simplified as follows (\cite{GS2,GS3}).

Let $E$ be a principal $G$-bundle over $X$, let $F=E(\frak{g}')$ be the 
vector bundle associated to the adjoint representation in the semisimple
part $\frak{g}'$ of the Lie algebra $\frak{g}=\frak{g}'\bigoplus Z(\frak{g})$.
Let
$$[\; , \; ]: F\otimes F\rightarrow F$$
be the Lie algebra structure and let
$$\kappa: F\otimes F\rightarrow \mathcal{O}_{X}$$
be the Killing form, inducing an isomorphism $F\cong F^{\vee}$ and assigning an orthogonal
$$F'{}^{\bot}= \ker(F\hookrightarrow F^{\vee}\twoheadrightarrow F'{}^{\vee})$$
to each subsheaf $F'\subset F$. An orthogonal algebra filtration of $F$ is a filtration 
\begin{equation}
 \label{orthogonalfiltration}
0\subset F_{-t}\subset F_{-t+1}\subset \cdots \subset F_{0}\subset \cdots \subset F_{t-1}\subset F_{t}=F
\end{equation}
verifying $F_{i}^{\bot}=F_{-i-1}$ and $[F_{i},F_{j}]\subset F_{i+j}$. Define
$$P_{F_{\bullet}}:=\sum_{i} (\rk F \cdot P_{F_{i}}-\rk F_{i}\cdot P_{F})$$
to be the Hilbert polynomial of a filtration and declare $E$ to be semistable if 
for every orthogonal algebra filtration $F_{\bullet}\subset F$ we have
$$P_{F_{\bullet}}\leq 0\; .$$

The case of the orthogonal group with the standard representation will be described
in more detail in Section \ref{sec:constructing}.

We will now describe in more detail the case of the adjoint representation. 

Ramanathan states in \cite{Ra} that every unstable principal $G$-bundle admits a unique canonical reduction $\sigma$ 
to a parabolic $P\subset G$ satisfying the following two conditions:
\begin{enumerate}
 \item \label{can1} 
 for every non-trivial character $\chi$ of $P$ given by a nonnegative linear combination of simple roots 
 (with respect to some fixed Borel subgroup of $P$), the associated line bundle $\chi_{\ast}\sigma^{\ast}E$ has positive degree,
 \item \label{can2}
for the Levi quotient $P\twoheadrightarrow L$, the associated principal $L$-bundle is
semistable.
\end{enumerate}
In \cite{AB}, Atiyah and Bott showed that the Harder-Narasimhan filtration in 
for the adjoint vector bundle is indeed an orthogonal algebra filtration
as in (\ref{orthogonalfiltration}) and the middle term $E_{0}$ gives a reduction of
structure group of the principal $G$-bundle to a parabolic subgroup. In particular, when 
$G={\rm GL}(n,\mathbb{C})$, the canonical reduction corresponds to the
Harder-Narasimhan filtration of the associated vector bundle of rank $n$. In \cite{Be}
and \cite{BH} the assertion of Ramanathan is proved and
also shown that the reduction in \cite{AB} coincides with the one in \cite{Ra}.
Finally, \cite{AAB} generalizes the notion of the canonical reduction to principal 
$G$-bundles over compact K\"ahler manifolds $X$ by considering reductions to 
parabolics over a big open set $U\subset X$ which satisfy properties (\ref{can1}) 
and (\ref{can2}); the reduction in \cite{AAB} is constructed by following the
method in \cite{AB}.

Given an orthogonal or symplectic bundle over a projective variety $X$, we can construct the Gieseker Harder-Narasimhan filtration of its underlying 
vector bundle. On the other hand, we can construct the canonical reduction of the
principal bundle. When $X$ is of complex dimension $1$, both notions coincide,
however they differ for higher dimensional varieties. Here we
show an example of this. 

In \cite{GSZ}, the authors explore the connections between the maximal 
$1$-parameter subgroup giving maximal unstability from the GIT point of view in a 
GIT construction of a moduli space (c.f. \cite{Ke}), and the Harder-Narasimhan 
filtration. These ideas give a method to construct the Harder-Narasimhan filtration 
in cases where we do not know it a priori. The paper \cite{Za1} applies this to 
rank $2$ tensors and the method does not work in more general situations (c.f. 
\cite[Section 2.5]{Za2}). It is natural to ask whether there exists a Harder-Narasimhan
filtration for 
principal bundles coming from the construction of the moduli space in \cite{GS3} 
through tensors, where a Gieseker type stability is used. The present work was
motivated by this question.

\section{Definitions}

Let $X$ be a polarized smooth complex projective variety of dimension $n$ and let 
$\mathcal{O}_{X}(1)$ denote the polarization. Denote by $\rk E$ and $\deg E$ the rank 
and degree of a torsion free coherent sheaf $E$. Recall that we define the 
\emph{degree} of a vector bundle $E$ over the polarized variety
$(X,\mathcal{O}_{X}(1))$ as $$\deg E=\int_{X} c_{1}(E)\cdot 
c_{1}(\mathcal{O}_{X}(1))^{n-1}=\int_{X} c_{1}(E)\cdot H^{n-1}\; ,$$ where $H$ is a 
hyperplane divisor of the polarizing line bundle.

\begin{dfn}
A torsion free coherent sheaf $E$ over a smooth projective variety
is \emph{Mumford-Takemoto semistable} if for every nontrivial subsheaf
$F\subset E$, with $\rk F < \rk E$,
we have
$$\frac{\deg F}{\rk F}\leq \frac{\deg E}{\rk E}\; .$$
Moreover, if the above inequality is strict, we say $E$ is \emph{Mumford-Takemoto
stable}. We say that $F$ \emph{Mumford-Takemoto destabilizes}
$E$ if 
$$\frac{\deg F}{\rk F}>\frac{\deg E}{\rk E}\; .$$
\end{dfn}

We call $\deg E/\rk E$ the {\it Mumford-Takemoto slope} of $E$. 
Let $\chi(E)$ be the Euler characteristic of a sheaf $E$. Given a torsionfree coherent sheaf $E$ over a polarized variety $(X,\mathcal{O}_{X}(1))$, we define the \emph{Hilbert polynomial} of $E$
as 
$$P_{E}(m)=\chi(E\otimes \mathcal{O}_{X}(m))\; ,$$
which is a polynomial on $m$. Given two polynomials $P(m)$ and $Q(m)$ we say that $P(m)\leq Q(m)$ if the inequality holds for $m\gg 0$. 

\begin{dfn}
A torsion free coherent sheaf $E$ over a smooth projective variety is
\emph{Gieseker semistable} if for every nontrivial subsheaf $F\subset E$,
with $\rk F < \rk E$, we have
$$\frac{P_{F}(m)}{\rk F}\leq \frac{P_{E}(m)}{\rk E}\; .$$
Moreover, if the above inequality is strict, we say $E$ is \emph{Gieseker stable}. We say that $F$ \emph{Gieseker destabilizes}
$E$ if 
$$\frac{P_{F}(m)}{\rk F}>\frac{P_{E}(m)}{\rk E}\; .$$
\end{dfn}

The above polynomial $P_{E}(m)/\rk E$ is called the {\it Gieseker slope} of $E$. 

Note that $F$ Mumford-Takemoto destabilizes $E$ implies $F$ Gieseker destabilizes $E$ but 
not the other way around.

Given a torsion free sheaf $E$, there exists a unique filtration
\begin{equation}
\label{HNeq}
0\subset E_{1} \subset E_{2} \subset \cdots \subset E_{t} \subset
E_{t+1}=E\; ,
\end{equation}
which satisfies the following properties, where
$E^{i}:=E_{i}/E_{i-1}$:
 \begin{enumerate}
   \item Every $E^{i}$ is Gieseker semistable
   \item The Hilbert polynomials verify
   $$\frac{P_{E^{1}}(m)}{\rk E^{1}}>\frac{P_{E^{2}}(m)}{\rk E^{2}}>\ldots>\frac{P_{E^{t+1}}(m)}{\rk E^{t+1}}$$
 \end{enumerate}
This filtration is called the \emph{Harder-Narasimhan filtration} of
$E$ (c.f. \cite{HN} or \cite[Theorem 1.3.6]{HL}).

\section{A Gieseker unstable bundle which is Mumford-Takemoto semistable}

Let $X$ be a $K3$ surface, which means that $X$ is a smooth complex projective 
surface with irregularity $q(X)=h^{1}(\mathcal{O}_{X})=0$ and trivial canonical 
bundle. Since $X$ is K\"ahler, there exists a K\"ahler-Einstein metric on $X$ (c.f. 
\cite{Ya}), hence $TX$ is polystable (c.f. \cite{Ko}). Moreover, $TX$ is 
indecomposable, therefore $TX$ is Mumford-Takemoto stable. The first Chern class of $TX$
vanishes, so $\deg (TX)=0$.

We consider the sheaves $TX$ and $\mathcal{O}_{X}$ and calculate their Hilbert polynomials. 

Let us calculate $\chi(TX\bigotimes \mathcal{O}_{X}(m))$ by using the
Hirzebruch-Riemann-Roch theorem:
$$\chi(TX\otimes \mathcal{O}_{X}(m))=\deg(\ch(TX\otimes \mathcal{O}_{X}(m))\cdot \td(TX))_{2}\; ,$$
where the subscript $2$ refers to the component of the cohomology in
$H^4(X,{\mathbb Q})$.

The Chern character of a vector bundle is (c.f. \cite[Appendiz A.4]{Ha}) 
$\ch(E)=\sum_{i=1}^{r}e^{a_{i}}$, with $c_{t}(E)=\Pi_{i=1}^{r}(1+a_{i}t)$ being the Chern 
polynomial, while $\td(TX)$ denotes the Todd character of the tangent bundle. Given 
that $\ch(E\bigotimes F)=\ch(E)\cdot \ch(F)$, and $\mathcal{O}_{X}(m)$ is a line 
bundle,
$$\ch(\mathcal{O}_{X}(m))=e^{[\mathcal{O}_{X}(m)]}=e^{(mH)}=1+mH+\frac{(mH)^{2}}{2!}+\frac{(mH)^{3}}{3!}+\cdots 
\; ,$$ where $H$ is the class of a hyperplane divisor of $\mathcal{O}_{X}(1)$. 
Therefore, in our case, $$\chi(TX\otimes \mathcal{O}_{X}(m))=\deg(\ch(TX\otimes 
\mathcal{O}_{X}(m))\cdot \td(TX))_{2}= \deg(\ch(TX)\cdot \td(TX)\cdot 
\ch(\mathcal{O}_{X}(m)))_{2}\; .$$

The Chern and Todd classes of a vector bundle $E$ are given by 
$$\ch(E)=\rk E+c_{1}+\frac{1}{2}(c_{1}^{2}-2c_{2})+\ldots$$
$$\td(E)=1+\frac{1}{2}c_{1}+\frac{1}{12}(c_{1}^{2}+c_{2})+\ldots\; ,$$
then, in our case, 
$$\ch(TX)=\rk (TX)+c_{1}(TX)+\frac{1}{2}(c_{1}(TX)^{2}-2c_{2}(TX))=2-c_{2}(TX)$$
$$\td(TX)=1+\frac{1}{2}c_{1}(TX)+\frac{1}{12}(c_{1}(TX)^{2}+c_{2}(TX))=1+\frac{c_{2}(TX)}{12}$$
$$\ch(\mathcal{O}_{X}(m))=1+mH+\frac{m^{2}H^{2}}{2}\; .$$
Hence we have, 
$$\chi(TX\otimes \mathcal{O}_{X}(m))=$$
$$[(2-c_{2}(TX))\cdot (1+\frac{1}{12}c_{2}(TX))\cdot (1+mH+\frac{m^{2}H^{2}}{2})]_{2}=$$
$$[(2-\frac{5}{6}c_{2}(TX))\cdot (1+mH+\frac{m^{2}H^{2}}{2})]_{2}=$$
$$m^{2}H^{2}-\frac{5}{6}c_{2}(TX)\; .$$

Using the same arguments, let us calculate $\chi(\mathcal{O}_{X}(m))$,
$$\chi(\mathcal{O}_{X}(m))=\deg(\ch(\mathcal{O}_{X}(m))\cdot \td(TX))_{2}=$$
$$[(1+mH+\frac{m^{2}H^{2}}{2})\cdot (1+\frac{c_{2}(TX)}{12})=$$
$$\frac{m^{2}H^{2}}{2}+\frac{c_{2}(TX)}{12}\; .$$

The Euler characteristic of $\mathcal{O}_{X}$ is also given by
$$\chi(\mathcal{O}_{X})=h^{0}(\mathcal{O}_{X})-h^{1}(\mathcal{O}_{X})+h^{2}(\mathcal{O}_{X})=$$
$$2h^{0}(\mathcal{O}_{X})=2$$
because on a $K3$ surface we have $h^{1}(\mathcal{O}_{X})=0$ and, by Serre duality and triviality of the canonical bundle, 
$h^{2}(\mathcal{O}_{X})=h^{0}(\mathcal{O}_{X})$. On the other
hand, Hirzebruch-Riemann-Roch theorem gives
$$\chi(\mathcal{O}_{X})=\frac{1}{12}c_{2}(TX)\; ,$$
from which we get $c_{2}(TX)=24$. 

Therefore, we can write the Hilbert polynomials of $TX\bigotimes \mathcal{O}_{X}$
and $\mathcal{O}_{X}$:
$$P_{TX}(m)=\chi(TX\otimes \mathcal{O}_{X}(m))=$$
$$m^{2}H^{2}-\frac{5}{6}c_{2}(TX)=m^{2}H^{2}-20\; ,$$
$$P_{\mathcal{O}_{X}}(m)=\chi(\mathcal{O}_{X}(m))=$$
$$\frac{m^{2}H^{2}}{2}+\frac{c_{2}(TX)}{12}=\frac{m^{2}H^{2}}{2}+2\; .$$

\begin{prop}\label{prop1}
The Gieseker slope of the line bundle $\mathcal{O}_{X}$ is strictly bigger
than the Gieseker slope of $TX$. 
\end{prop}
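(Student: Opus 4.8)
The plan is to read off the two Gieseker slopes directly from the Hilbert polynomials computed just above the statement and compare them. Recall that the Gieseker slope of a torsion free sheaf $E$ is $P_{E}(m)/\rk E$. Since $\rk \mathcal{O}_{X}=1$, the Gieseker slope of $\mathcal{O}_{X}$ is simply $P_{\mathcal{O}_{X}}(m)=\tfrac{m^{2}H^{2}}{2}+2$. Since $\rk TX=2$, the Gieseker slope of $TX$ is $\tfrac{1}{2}P_{TX}(m)=\tfrac{m^{2}H^{2}}{2}-10$.

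The key observation is that these two quadratic polynomials in $m$ have the same leading term $\tfrac{m^{2}H^{2}}{2}$ and no linear term, so their difference is the constant
$$P_{\mathcal{O}_{X}}(m)-\frac{P_{TX}(m)}{2}=2-(-10)=12>0,$$
which is strictly positive and independent of $m$. Therefore $P_{\mathcal{O}_{X}}(m)/\rk \mathcal{O}_{X}>P_{TX}(m)/\rk TX$ for every $m$, in particular for $m\gg 0$, which is precisely the asserted inequality of Gieseker slopes in the sense of the convention that "$P(m)\le Q(m)$" means the inequality holds for $m\gg 0$.

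I do not expect any genuine obstacle: the mathematical content is entirely contained in the Hirzebruch--Riemann--Roch computations already carried out, and the only point to verify is that the top-degree (quadratic) coefficients of the two slopes agree, so that the comparison is decided by the constant terms $2$ and $-10$. One may additionally remark that $H^{2}>0$ because $\mathcal{O}_{X}(1)$ is ample, but this is not needed here since the leading terms cancel; this is also the feature that will later make $\mathcal{O}_{X}$ a Gieseker-destabilizing but not Mumford--Takemoto-destabilizing subsheaf once it is realized inside an appropriate bundle.
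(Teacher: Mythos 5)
Your proof is correct and follows essentially the same route as the paper: both simply divide the Hilbert polynomials computed via Hirzebruch--Riemann--Roch by the ranks and compare, the leading terms cancelling so that the inequality is decided by the constants $2$ and $-10$. In fact your version is slightly cleaner, since the paper's displayed simplification contains a small algebraic slip (writing $\tfrac{m^{2}H^{2}-20}{2}=m^{2}H^{2}-10$ instead of $\tfrac{m^{2}H^{2}}{2}-10$), which you implicitly correct.
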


\begin{proof}
{}From the above calculations we have
$$\frac{P_{\mathcal{O}_{X}}(m)}{\rk 
\mathcal{O}_{X}}=\frac{m^{2}H^{2}}{2}+2> \frac{P_{TX}(m)}{\rk 
TX}=\frac{m^{2}H^{2}-20}{2}=m^{2}H^{2}-10\; ,$$ proving the proposition.
\end{proof}

\section{Extensions and Harder-Narasimhan filtration}

First, we construct a stable bundle as an extension of $\mathcal{O}_{X}$ by $TX$. 

\begin{prop}\label{prop2}
Let $V$ be a nontrivial extension 
$$0\rightarrow TX\rightarrow V\rightarrow \mathcal{O}_{X}\rightarrow 0\; .$$ 
Then $V$ is Gieseker semistable.
\end{prop}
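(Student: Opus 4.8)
The plan is to verify the Gieseker inequality by hand for every nontrivial subsheaf $F\subset V$ of rank $1$ and of rank $2$, feeding in two facts: $TX$ is Mumford--Takemoto stable of slope $0$, and the extension is nontrivial. First I would record the numerics. Since $V$ is an extension of the locally free sheaves $\mathcal{O}_X$ and $TX$, it is a vector bundle of rank $3$, and additivity of the Euler characteristic in the sequence twisted by $\mathcal{O}_X(m)$ gives $P_V(m)=P_{TX}(m)+P_{\mathcal{O}_X}(m)=\frac{3}{2}m^2H^2-18$, so $P_V(m)/3=\frac{1}{2}m^2H^2-6$. It then suffices to treat \emph{saturated} subsheaves: if $F$ Gieseker destabilizes $V$ then so does its saturation $\overline F$, since $\rk\overline F=\rk F$, the quotient $V/\overline F$ is torsion free, and $P_F(m)\le P_{\overline F}(m)$ for $m\gg 0$ because the difference is the Hilbert polynomial of the torsion sheaf $\overline F/F$, which is eventually nonnegative.

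For a saturated subsheaf $L\subset V$ of rank $1$, $L$ is a line bundle. Composing $L\hookrightarrow V$ with $V\to\mathcal{O}_X$ is either zero, in which case $L\hookrightarrow TX$ and stability of $TX$ (slope $0$) forces $\deg L<0$; or it is nonzero, hence injective, so $L\hookrightarrow\mathcal{O}_X$ and $\deg L\le 0$, with $\deg L=0$ forcing $L\cong\mathcal{O}_X$ and $L\to\mathcal{O}_X$ an isomorphism, which would split the extension --- impossible by hypothesis. Hence $\deg L\le -1$ in every case, and Riemann--Roch on the $K3$ surface gives $P_L(m)=\frac{1}{2}m^2H^2+m\deg L+\frac{1}{2}c_1(L)^2+2$, so $P_L(m)-P_V(m)/3=m\deg L+\frac{1}{2}c_1(L)^2+8\to-\infty$; thus $P_L(m)<P_V(m)/3$ for $m\gg 0$.

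For a saturated subsheaf $F\subset V$ of rank $2$, set $Q=V/F$, a torsion free sheaf of rank $1$; the inequality $P_F(m)/2\le P_V(m)/3$ is equivalent to $P_V(m)/3\le P_Q(m)$, so I must bound $P_Q$ from below. Here I look at the composite $TX\to V\to Q$. If it vanishes then $F=TX$ and $Q=\mathcal{O}_X$, and $P_{TX}(m)/2=\frac{1}{2}m^2H^2-10\le\frac{1}{2}m^2H^2-6$. If it does not vanish, its image $I\subset Q$ has rank $1$ and $I\cong TX/K$ with $K=\ker(TX\to I)$ a nonzero rank-$1$ subsheaf of $TX$; stability of $TX$ gives $\deg K\le -1$, hence $\deg Q\ge\deg I=-\deg K\ge 1$. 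Writing $M=Q^{\vee\vee}$ (a line bundle with $\deg M=\deg Q\ge 1$) and $\ell$ for the length of $M/Q$, Riemann--Roch gives $P_Q(m)=\frac{1}{2}m^2H^2+m\deg M+\frac{1}{2}c_1(M)^2+2-\ell$, so $P_Q(m)-P_V(m)/3=m\deg M+\frac{1}{2}c_1(M)^2+8-\ell\to+\infty$; thus $P_Q(m)>P_V(m)/3$ for $m\gg 0$.

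Combining the cases, every nontrivial $F$ with $\rk F<\rk V$ satisfies $P_F(m)/\rk F\le P_V(m)/3$ for $m\gg 0$, so $V$ is Gieseker semistable (and since all the inequalities turned out strict, in fact Gieseker stable). The main obstacle is the rank-$2$ case, where one must control a torsion free rank-$1$ \emph{quotient} $Q$ from below rather than a subsheaf from above: the crux is that a nonzero map from the stable bundle $TX$ into $Q$ forces $\deg Q>0$, while in the degenerate case the quotient is exactly $\mathcal{O}_X$, whose Gieseker slope (Proposition \ref{prop1}), although larger than that of $TX$, still does not exceed that of $V$.
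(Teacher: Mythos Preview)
Your argument is correct. The one blemish is in the closing summary sentence: you write that the Gieseker slope of $\mathcal{O}_X$ ``still does not exceed that of $V$'', whereas in fact it \emph{exceeds} it (which is precisely what you need, since $P_F/2\le P_V/3$ is equivalent to $P_V/3\le P_Q$ when $F=TX$ and $Q=\mathcal{O}_X$). The computations in the body of the proof have the inequality the right way round, so this is only a slip of the pen in the commentary.

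The paper proceeds differently. Rather than checking all saturated subsheaves rank by rank, it argues by contradiction: take the maximal Gieseker destabilizer $W$ and intersect it with $TX$, obtaining $0\to W_0\to W\to W_1\to 0$ with $W_1\subset\mathcal{O}_X$; then use Gieseker stability of $TX$ and of $\mathcal{O}_X$ together with Proposition~\ref{prop1} to force $W_0=0$, so that $W$ would split the sequence. Your route is more explicit and computational: you reduce to saturated subsheaves, handle rank~$1$ by projecting to $\mathcal{O}_X$, and handle rank~$2$ by mapping $TX$ to the rank-one torsion-free quotient $Q$, extracting in each case a strict degree bound and then feeding it into Riemann--Roch on the K3 surface. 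This buys you an explicit linear term in $m$, hence the strict Gieseker inequality for every proper subsheaf, so you actually prove $V$ is Gieseker \emph{stable}, not just semistable; the paper's structural argument only states semistability.
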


\begin{proof}
First, we see that there exists such a nontrivial extension. Indeed, 
$$\dim \Ext^{1}(\mathcal{O}_{X}, TX)=h^{1}(\mathcal{O}_{X}^{\vee}\otimes TX)=h^{1}(TX)\; ,$$
and
$$\chi(TX)=h^{0}(TX)-h^{1}(TX)+h^{2}(TX)=-20$$
as calculated before. Hence $h^{1}(TX)>0$, so and there exist nontrivial extensions. 

Suppose $V$ is not Gieseker semistable. Let $W\subset V$ be the maximal Gieseker
destabilizer (first nonzero term of the Harder-Narasimhan filtration (see
\eqref{HNeq})) of $V$. Consider
the short exact sequence
$$
0 \rightarrow W_0:= W\cap TX \rightarrow W\rightarrow W_1:= W/W_0 \rightarrow 0\, .
$$
Note that $W_1\subset V/TX = \mathcal{O}_{X}$.

First assume that $W_1 =0$. This implies that $W\subset TX$. But $TX$ is Gieseker
stable and the Gieseker slope of $TX$ is strictly smaller than the
Gieseker slope of $\mathcal{O}_{X}$ (Proposition \ref{prop1}), so the
Gieseker slope of $W$ is not bigger than the Gieseker slope of $V$. This contradicts
the assumption that $W$ is destabilizes $V$.

Now assume that $W_1 \not= 0$. So $\rk W_1 > 0$. Since $\rk W < 3$, this implies
that $\rk W_0 < 2$. As $TX$ is Gieseker stable (because it is Mumford-Takemoto stable), if $W_0\neq 0$ the
Gieseker slope of $W_0$ is strictly smaller than the Gieseker slope of $TX$. Also,
The Gieseker slope of $W_1$ is not bigger than the Gieseker slope of $\mathcal{O}_{X}$.
Therefore, the Gieseker slope of $W$ is not bigger than the Gieseker slope of $V$. This
again contradicts the assumption that $W$ is destabilizes $V$. Therefore, $W_0=0$ and $W$ would split the extension,
hence, $V$ is semistable.
\end{proof}

Let $V$ be a non split extension 
$$0\rightarrow TX\rightarrow V\rightarrow \mathcal{O}_{X}\rightarrow 0\; .$$
Dualizing the above sequence we obtain 
$$0\rightarrow \mathcal{O}_{X}\rightarrow V^{\vee}\rightarrow TX\rightarrow 0$$
because $\Omega^{1}(X)=(TX)^{\vee}\simeq TX$ with the isomorphism given by a
trivialization of the canonical line bundle on the $K3$ surface $X$. 

\begin{lem}
The above vector bundle $V^{\vee}$ is not Gieseker semistable.
\end{lem}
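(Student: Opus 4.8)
The plan is to exhibit an explicit destabilizing subsheaf of $V^{\vee}$ and to compare Gieseker slopes using the Hilbert polynomials already computed. The natural candidate is the subsheaf $\mathcal{O}_{X}\subset V^{\vee}$ appearing in the dualized exact sequence
$$0\rightarrow \mathcal{O}_{X}\rightarrow V^{\vee}\rightarrow TX\rightarrow 0\;.$$
Since $\deg V^{\vee}=\deg \mathcal{O}_{X}+\deg TX=0$ and $\rk V^{\vee}=3$, the bundle $V^{\vee}$ has the same Mumford-Takemoto slope $0$ as both $\mathcal{O}_{X}$ and $TX$, so at the level of ordinary slopes nothing destabilizes; this is exactly why one must pass to Hilbert polynomials. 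Using additivity of the Hilbert polynomial on the exact sequence, $P_{V^{\vee}}(m)=P_{\mathcal{O}_{X}}(m)+P_{TX}(m)=\left(\tfrac{m^{2}H^{2}}{2}+2\right)+\left(m^{2}H^{2}-20\right)=\tfrac{3m^{2}H^{2}}{2}-18$, so the Gieseker slope of $V^{\vee}$ is $\tfrac{1}{3}P_{V^{\vee}}(m)=\tfrac{m^{2}H^{2}}{2}-6$.

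The next step is the comparison: the Gieseker slope of the subsheaf $\mathcal{O}_{X}$ is $\tfrac{m^{2}H^{2}}{2}+2$, which is strictly greater than $\tfrac{m^{2}H^{2}}{2}-6$ for all $m$ (indeed the leading terms agree and the constant terms differ). Hence $\mathcal{O}_{X}$ Gieseker destabilizes $V^{\vee}$, so $V^{\vee}$ is not Gieseker semistable. One should note that by Proposition \ref{prop1} the line bundle $\mathcal{O}_{X}$ already has Gieseker slope strictly bigger than that of $TX$, and averaging $\mathcal{O}_{X}$ with the lower-slope piece $TX$ only lowers it, so the sub $\mathcal{O}_{X}$ is forced to destabilize; this is the conceptual content. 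It is worth remarking that $V^{\vee}$ is still Mumford-Takemoto semistable by the same argument as in Proposition \ref{prop2} applied to the dual sequence (or directly, since both $\mathcal{O}_{X}$ and $TX$ have slope $0$ and $TX$ is stable), so this genuinely is a Gieseker phenomenon invisible to the Mumford-Takemoto slope.

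There is essentially no hard step here — the lemma is a bookkeeping consequence of the Riemann-Roch computations in Section 3 and the additivity of Hilbert polynomials — but the one point that deserves care is verifying that the inclusion $\mathcal{O}_{X}\hookrightarrow V^{\vee}$ is the correct destabilizer and that the Harder-Narasimhan filtration of $V^{\vee}$ has $\mathcal{O}_{X}$ as its first term, since this is presumably what the subsequent section needs in order to show the HN filtration does not come from a parabolic reduction of the orthogonal (or symplectic) structure. Concretely, since $\mathcal{O}_{X}$ is Gieseker semistable (it is a line bundle) and its quotient $TX$ is Gieseker stable with strictly smaller Gieseker slope, the two-step filtration $0\subset \mathcal{O}_{X}\subset V^{\vee}$ already satisfies the defining properties (1) and (2) of the Harder-Narasimhan filtration, so by uniqueness it \emph{is} the Harder-Narasimhan filtration of $V^{\vee}$. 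I would state this explicitly at the end of the proof, as it is the fact that gets used downstream.
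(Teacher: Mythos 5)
Your proof is correct and is essentially the paper's argument: the paper likewise deduces the lemma immediately from Proposition \ref{prop1}, since $\mathcal{O}_{X}\subset V^{\vee}$ has strictly larger Gieseker slope than $TX$ and hence than $V^{\vee}$. Your extra remarks (the explicit Hilbert polynomial of $V^{\vee}$, Mumford--Takemoto semistability of $V^{\vee}$, and the identification of $0\subset\mathcal{O}_{X}\subset V^{\vee}$ as its Harder--Narasimhan filtration) are all accurate but not needed for the lemma as stated.
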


\begin{proof}
{}From Proposition \ref{prop1} it follows that the subbundle $\mathcal{O}_{X}$ of
$V^{\vee}$ destabilizes it.
\end{proof}

\begin{prop}
\label{HN}
The Gieseker Harder-Narasimhan filtration of $V\bigoplus V^{\vee}$ is
\begin{equation}\label{f1}
0\subset \mathcal{O}_{X}\subset V\oplus\mathcal{O}_{X}\subset V\oplus V^{\vee}\; .
\end{equation}
\end{prop}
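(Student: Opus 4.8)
The plan is to use the uniqueness of the Gieseker Harder--Narasimhan filtration recalled after \eqref{HNeq}: since $V\oplus V^{\vee}$ is a vector bundle, hence torsion free, that filtration exists and is the unique one whose successive quotients are Gieseker semistable with strictly decreasing Gieseker slopes. So to prove the proposition it is enough to verify these two properties for \eqref{f1}, and no search among the subsheaves of $V\oplus V^{\vee}$ is needed. First I would identify the successive quotients. Here $\mathcal{O}_{X}$ denotes the sub-line-bundle of $V^{\vee}$ appearing in the sequence $0\rightarrow\mathcal{O}_{X}\rightarrow V^{\vee}\rightarrow TX\rightarrow 0$ obtained by dualizing the defining extension of $V$ and using $(TX)^{\vee}\cong TX$; inside $V\oplus V^{\vee}$ we take $\mathcal{O}_{X}$ to be $0\oplus\mathcal{O}_{X}$ and $V\oplus\mathcal{O}_{X}$ to be $V\oplus\mathcal{O}_{X}\subset V\oplus V^{\vee}$. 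Then the three quotients of \eqref{f1} are $\mathcal{O}_{X}$, then $(V\oplus\mathcal{O}_{X})/(0\oplus\mathcal{O}_{X})\cong V$, and finally $(V\oplus V^{\vee})/(V\oplus\mathcal{O}_{X})\cong V^{\vee}/\mathcal{O}_{X}\cong TX$.

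Next I would check the two defining properties. Each quotient is Gieseker semistable: $\mathcal{O}_{X}$ is a line bundle; $V$ is Gieseker semistable by Proposition \ref{prop2}; and $TX$, being Mumford--Takemoto stable, is Gieseker stable, hence Gieseker semistable. (It is harmless that $V$ is only semistable, since Harder--Narasimhan quotients are only required to be semistable.) For the slopes, from $P_{TX}(m)=m^{2}H^{2}-20$ and $P_{\mathcal{O}_{X}}(m)=\frac{m^{2}H^{2}}{2}+2$ computed in Section 3, together with additivity of Hilbert polynomials in short exact sequences, one gets $P_{V}(m)=P_{V^{\vee}}(m)=\frac{3m^{2}H^{2}}{2}-18$. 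Hence the Gieseker slopes of the three quotients are
$$P_{\mathcal{O}_{X}}(m)=\frac{m^{2}H^{2}}{2}+2,\qquad \frac{P_{V}(m)}{3}=\frac{m^{2}H^{2}}{2}-6,\qquad \frac{P_{TX}(m)}{2}=\frac{m^{2}H^{2}}{2}-10,$$
whose quadratic leading terms agree while the constant terms satisfy $2>-6>-10$, so they are strictly decreasing, in particular for $m\gg 0$. By uniqueness of the Harder--Narasimhan filtration, \eqref{f1} is it.

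As for the main obstacle: there is essentially no hard step, the argument being bookkeeping built on the earlier results. The only points that require care are (i) to invoke the characterization of the Harder--Narasimhan filtration by its two properties rather than attempting to compute the maximal destabilizing subsheaf of $V\oplus V^{\vee}$ directly, and (ii) to take the copy of $\mathcal{O}_{X}$ occurring in \eqref{f1} to be precisely the canonical sub-line-bundle of $V^{\vee}$, so that the middle quotient is $V$ (and not, say, $\mathcal{O}_{X}\oplus\mathcal{O}_{X}$ or $TX\oplus\mathcal{O}_{X}$) and the top quotient is $TX$.
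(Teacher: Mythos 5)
Your proposal is correct and follows essentially the same route as the paper: verify that the successive quotients of \eqref{f1} are $\mathcal{O}_{X}$, $V$, $TX$, that each is Gieseker semistable (using Proposition \ref{prop2} for $V$ and Mumford--Takemoto stability for $TX$), and that their Gieseker slopes $\frac{m^{2}H^{2}}{2}+2 > \frac{m^{2}H^{2}}{2}-6 > \frac{m^{2}H^{2}}{2}-10$ are strictly decreasing, then invoke uniqueness. Your explicit identification of the copy of $\mathcal{O}_{X}$ as the canonical sub-line-bundle of $V^{\vee}$ is a helpful clarification that the paper leaves implicit.
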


\begin{proof}
Let us check that the filtration in \eqref{f1} satisfies the two properties which
characterize the Harder-Narasimhan filtration in \eqref{HNeq}. The successive
quotients for the filtration in \eqref{f1} are: $\mathcal{O}_{X}$, 
$(V\bigoplus \mathcal{O}_{X})/\mathcal{O}_{X}=V$, and $(V\bigoplus V^{\vee})/(V\bigoplus
\mathcal{O}_{X})=TX$. All these three vector bundles $\mathcal{O}_{X}$,
$V$ and $TX$ are Gieseker semistable; $V$ is semistable by Proposition \ref{prop2}
and $TX$ is semistable because it is Mumford-Takemoto stable.

Concerning the Hilbert polynomials of the above three vector bundles, 
$$\frac{P_{E^{1}}(m)}{\rk E^{1}}=\frac{P_{\mathcal{O}_{X}}(m)}{\rk \mathcal{O}_{X}}=\frac{m^{2}H^{2}}{2}+2$$
$$\frac{P_{E^{2}}(m)}{\rk E^{2}}=\frac{P_{V}(m)}{\rk V}=\frac{m^{2}H^{2}}{2}-6$$
$$\frac{P_{E^{3}}(m)}{\rk E^{3}}=\frac{P_{TX}(m)}{\rk TX}=\frac{m^{2}H^{2}}{2}-10\; ,$$
which are, clearly, decreasing. 
\end{proof}

\section{Constructing a principal bundle}
\label{sec:constructing}

Consider the vector bundle $V\bigoplus V^{\vee}$ in Proposition \ref{HN} which is self
dual. The duality gives an orthogonal structure on $V\bigoplus V^{\vee}$, i.e., a symmetric nondegenerate
homomorphism
$$\varphi: (V\oplus V^{\vee})\otimes (V\oplus V^{\vee})\rightarrow \mathcal{O}_{X}\, ;$$
nondegeneracy means that $\varphi$ induces an isomorphism between $(V\bigoplus V^{\vee})$ and its dual. 
In other words, we get a reduction of the structure group of the
${\rm GL}(6,\mathbb{C})$-bundle to ${\rm O}(6,\mathbb{C})$.

Given $(E,\varphi)$ an orthogonal bundle, we say that $F\subset E$ is 
\emph{isotropic} if $\varphi|_{F\otimes F}=0$. For a subsheaf $F\subset E$, by 
using $\varphi$ we can associate the \emph{annihilator} subsheaf $$F^{\bot}:=\ker 
(E\stackrel{\varphi}\rightarrow E^{\vee}\rightarrow F^{\vee})\; .$$

If we apply the general definition of stability in the introduction to the case
of the orthogonal bundle and the standard representation, we obtain the following
definition.

\begin{dfn}[{\cite[Definition 5.4]{GS1}}]
An orthogonal bundle $(E,\varphi)$ is called \emph{semistable} (as an orthogonal
bundle) if for every isotropic subbundle $F\subset E$, 
$$P_{F}(m)+P_{F^{\bot}}(m)\leq P_{E}(m)\; .$$
We say that $(E,\varphi)$ is \emph{stable} if the above inequality is strict. 
\end{dfn}

\begin{lem}
The Harder-Narasimhan filtration of $V\bigoplus V^{\vee}$ in Proposition \ref{HN}
does not correspond to any parabolic reduction of the principal ${\rm O}(6,
\mathbb{C})$-bundle.
\end{lem}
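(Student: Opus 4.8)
The plan is to show that the flag of subsheaves appearing in the Harder–Narasimhan filtration \eqref{f1}, namely $0\subset \mathcal{O}_X\subset V\oplus\mathcal{O}_X\subset V\oplus V^\vee$, is not the flag attached to any reduction of structure group of the orthogonal bundle $(V\oplus V^\vee,\varphi)$ to a parabolic subgroup of ${\rm O}(6,\mathbb{C})$. Recall that parabolic subgroups of ${\rm O}(6,\mathbb{C})$ are, up to conjugacy, the stabilizers of isotropic flags; concretely, a reduction to a parabolic of the orthogonal bundle corresponds to a filtration of $V\oplus V^\vee$ by subbundles $0\subset F_1\subset\cdots\subset F_k\subset E$ in which the $F_i$ up to the middle are isotropic and the later terms are their annihilators, i.e.\ $F_{k+1-i}=F_i^\bot$ in the appropriate indexing (this is exactly the orthogonal algebra filtration condition specialized to the standard representation, and it is the shape of the filtration \eqref{filtF} for ${\rm O}(6,\mathbb{C})$). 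So it suffices to prove that the first step $\mathcal{O}_X\hookrightarrow V\oplus V^\vee$ of \eqref{f1} is \emph{not} isotropic for $\varphi$.

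The key computation is therefore to identify the copy of $\mathcal{O}_X$ sitting inside $V\oplus V^\vee$ and evaluate $\varphi$ on it. The orthogonal form $\varphi$ on $W:=V\oplus V^\vee$ is the hyperbolic pairing: for $v\oplus\xi$ and $v'\oplus\xi'$ one has $\varphi(v\oplus\xi,\,v'\oplus\xi')=\xi(v')+\xi'(v)$, which on the diagonal gives $\varphi(v\oplus\xi,v\oplus\xi)=2\xi(v)$. Now the subsheaf $\mathcal{O}_X\subset W$ occurring in \eqref{f1} is precisely the copy $\mathcal{O}_X\subset V^\vee$ coming from the second exact sequence $0\to\mathcal{O}_X\to V^\vee\to TX\to 0$. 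Under the inclusion $W=V\oplus V^\vee$, a local section of this $\mathcal{O}_X$ has the form $0\oplus\xi$ with $\xi\in\mathcal{O}_X\subset V^\vee$, and then $\varphi(0\oplus\xi,0\oplus\xi)=2\xi(0)=0$. \emph{So this $\mathcal{O}_X$ actually is isotropic} — which means the obstruction is not at the first step but at the second: I would instead check the subbundle $V\oplus\mathcal{O}_X$. Its annihilator is $(V\oplus\mathcal{O}_X)^\bot = V^{\bot}\cap(\mathcal{O}_X)^{\bot}$; since $V\subset W$ is the Lagrangian $V\oplus 0$ one has $V^\bot=V\oplus 0=V$, so $(V\oplus\mathcal{O}_X)^\bot=V\cap(\mathcal{O}_X)^\bot$. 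For the filtration \eqref{f1} to come from a parabolic reduction one needs, with $E$ of rank $6$ and middle rank $3$: an isotropic $\mathcal{O}_X$ of rank $1$, then its annihilator of rank $5$ as the largest proper term — but $V\oplus\mathcal{O}_X$ has rank $4$, not $5$. Hence the rank pattern $(1,4,6)$ of \eqref{f1} is incompatible with the rank pattern $(1,3,5,6)$ or $(1,5,6)$ forced by \emph{any} isotropic-flag reduction of an ${\rm O}(6)$-bundle, unless the middle term $V\oplus\mathcal{O}_X$ is itself isotropic of rank $\le 3$, which it is not since $\operatorname{rk}(V\oplus\mathcal{O}_X)=4>3$.

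Concretely, then, I would argue as follows. Suppose \eqref{f1} did come from a reduction of $(W,\varphi)$ to a parabolic $P\subset{\rm O}(6,\mathbb{C})$. Every parabolic of ${\rm O}(6,\mathbb{C})$ is the stabilizer of an isotropic flag $0\subset U_1\subset\cdots\subset U_r\subset W$ with $\dim U_r\le 3$, and the associated filtration of $W$ is $0\subset U_1\subset\cdots\subset U_r\subset U_{r-1}^\bot\subset\cdots\subset U_1^\bot\subset W$; in particular \emph{every proper nonzero term of such a filtration is either isotropic of rank $\le 3$ or the annihilator of an isotropic subbundle, hence of rank $\ge 3$}. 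But the middle term $V\oplus\mathcal{O}_X$ of \eqref{f1} has rank $4$, so it cannot be isotropic; and its orthogonal $(V\oplus\mathcal{O}_X)^\bot=V\cap(\mathcal{O}_X)^\bot$ has rank $\le 3<4$, whereas the annihilator of a rank-$4$ subbundle that is not isotropic cannot reproduce the term $V\oplus\mathcal{O}_X$ itself — more to the point, $V\oplus\mathcal{O}_X$ is not its own annihilator and is not contained in its own annihilator, so it is neither of the two allowed types. This contradiction shows \eqref{f1} is not a parabolic reduction.

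The \textbf{main obstacle} is pinning down, cleanly and without choosing explicit trivializations, the precise identification of the rank-$1$ and rank-$4$ sheaves in \eqref{f1} as sub-objects of the orthogonal bundle $(V\oplus V^\vee,\varphi)$ — i.e.\ being careful that the $\mathcal{O}_X$ in \eqref{f1} is the one inside $V^\vee$ and that $V=V\oplus 0$ is Lagrangian for the hyperbolic form $\varphi$, so that the annihilator computations $V^\bot=V$ and $(V\oplus\mathcal{O}_X)^\bot\subset V$ are valid. Once these identifications are in place, the incompatibility of the rank vector $(1,4,6)$ with the isotropic-flag structure forced by ${\rm O}(6,\mathbb{C})$ is immediate, and the lemma follows.
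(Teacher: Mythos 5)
Your argument is correct and is essentially the paper's own (one-line) proof: the paper computes that the annihilator of the first term $\mathcal{O}_X$ is $TX\oplus V^{\vee}$, of rank $5$, and notes that a parabolic reduction of the ${\rm O}(6,\mathbb{C})$-bundle forces the filtration to be closed under taking annihilators, which \eqref{f1} is not since its middle term has rank $4$. Your rank-pattern analysis of the middle term $V\oplus\mathcal{O}_X$ (neither isotropic, being of rank $4>3$, nor the annihilator of any term present) is the same obstruction phrased from the other end, and your preliminary observations (that $\mathcal{O}_X\subset 0\oplus V^{\vee}$ is in fact isotropic, and that $V\oplus 0$ is Lagrangian) are accurate.
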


\begin{proof}
The annihilator of the subbundle ${\mathcal O}_X\subset V\bigoplus V^{\vee}$ in
\eqref{f1} is $TX\bigoplus V^{\vee}$. The lemma follows immediately from this.
\end{proof}

\subsection{Symplectic case}

The natural duality pairing between $V$ and $V^{\vee}$ also gives a
symplectic structure on $V\bigoplus V^{\vee}$. The annihilator of the subbundle
${\mathcal O}_X\subset V\bigoplus V^{\vee}$ in \eqref{f1} for the
symplectic structure on $V\bigoplus V^{\vee}$ is again $TX\bigoplus V^{\vee}$.
Therefore, the Harder-Narasimhan filtration of $V\bigoplus V^{\vee}$ in Proposition
\ref{HN} does not correspond to any parabolic reduction of this principal ${\rm Sp}(6,
\mathbb{C})$-bundle.

\section*{Acknowledgements}

We thank Tom\'as G\'omez for helpful comments. The first-named author thanks the
Instituto Superior  T\'ecnico for hospitality whilst the work was carried out. The
second-named author is supported by project \textquotedblleft Comunidade Portuguesa de Geometr\'ia Algebrica\textquotedblright
 RD0302 funded by Portuguese FCT and project MTM2010-17389 granted by Spanish Ministerio
de Econom\'ia y Competitividad, and the first-named author is supported by J. C. Bose
Fellowship.

\end{document}